\theoremstyle{plain}
\newtheorem{teo}{Theorem}[section]
\newtheorem{prop}[teo]{Proposition}
\newtheorem{ackn}{Acknowledgement\!}
\theoremstyle{definition}
\newtheorem{dfnz}[teo]{Definition}
\theoremstyle{remark}
\newtheorem{rem}[teo]{Remark}
\numberwithin{equation}{section}
\def\R{{{\mathbb R}}}
\def\RRR{{\mathrm R}}
\def\AAA{{\mathrm A}}
\def\HHH{{\mathrm H}}
\def\TTT{{\mathrm{T}}}
\def\comp{\circ}
\def\Ri{\R^{n+1}}
\begin{document}

\title[Remarks on Monotonicity Formula for MCF]
{Some Remarks on Huisken's Monotonicity Formula for Mean Curvature Flow}

\author[Annibale Magni]{Annibale Magni}
\address[Annibale Magni]{SISSA -- International School for Advanced
  Studies, Via Beirut 2--4, 
Trieste, Italy, 34014}
\email[A. Magni]{magni@sissa.it}

\author[Carlo Mantegazza]{Carlo Mantegazza}
\address[Carlo Mantegazza]{Scuola Normale Superiore di Pisa, P.za Cavalieri 7, Pisa, Italy, 56126}
\email[C. Mantegazza]{c.mantegazza@sns.it}

\date{10 January 2009}

\keywords{Mean Curvature Flow}

\begin{abstract}
We discuss a monotone quantity related to Huisken's
monotonicity formula and some technical consequences for mean curvature flow.
\end{abstract}

\maketitle

\tableofcontents

\section{Maximizing Huisken's Monotonicity Formula}

For an immersed hypersurface $M\subset\R^{n+1}$, we call $\AAA$ and $\HHH$
respectively its {\em second fundamental form} and its {\em mean
  curvature}.

Let $M_t=\varphi(M,t)$ be the {\em mean curvature flow} (MCF) of 
an $n$--dimensional compact hypersurface in
$\R^{n+1}$, defined by
the smooth family of immersions $\varphi:M\times[0,T)\to\R^{n+1}$
which satisfies $\partial_t\varphi=\HHH\nu$ 
where $\nu$ is the ``inner'' unit normal vector field to the hypersurface.

Huisken in~\cite{huisk3} found his fundamental monotonicity formula
\begin{equation}\label{monfor}
 \frac{d\,}{dt}\int_M\frac{e^{-\frac{\vert
  x-p\vert^2}{4(C-t)}}}{[4\pi(C-t)]^{n/2}}\,d\mu_t(x)=-\int_{M}\frac{e^{-\frac{\vert
      x-p\vert^2}{4(C-t)}}}{[4\pi(C-t)])^{n/2}}\left\vert\HHH+\frac{\langle x-p\,\vert\,\nu\rangle}{2(C-t)}
\right\vert^2\,d\mu_t(x)\leq 0\,,
\end{equation}
for every $p\in\Ri$, in the time interval $[0,\min\{C,T\})$. Here $d\mu_t$ is the
canonical measure on $M$ associated to the metric induced by the immersion at time $t$.\\
We call the quantity $\int_M\frac{e^{-\frac{\vert
  x-p\vert^2}{4(C-t)}}}{[4\pi(C-t)]^{n/2}}\,d\mu_t(x)$, the Huisken's functional.\\
Such formula was generalized by Hamilton in~\cite{hamilton7,hamilton8}
as follows, suppose that we have a positive smooth solution of $u_t=-\Delta u$ in
$\R^{n+1}\times [0,C)$ then, in the time interval $[0,\min\{C,T\})$, there holds
\begin{align}\label{huiskent}
  \frac{d\,}{dt}\,\Bigl[\,\sqrt{2(C-t)}\int_{M}u\,d\mu_t\,\Bigr]
  =&\,-\sqrt{2(C-t)}\int_{M}u\,\vert\HHH-\langle \nabla\log{u}\,\vert\,\nu\rangle\vert^2\,d\mu_t\\
  &\,-\sqrt{2(C-t)}\int_{M}\Bigl(\nabla^\perp\nabla^\perp u-\frac{\vert \nabla^\perp
    u\vert^2}{u}+\frac{u}{2(C-t)}\Bigr)\,d\mu_t\nonumber
\end{align}
where $\nabla^\perp$ denotes the covariant derivative along the normal
direction.

\begin{dfnz} Let $\varphi:M\to\R^{n+1}$ be a smooth, compact, immersed
  hypersurface.\\
Given $\tau>0$, we consider the family ${\mathcal
    F}_\tau$ of smooth positive functions $u:\R^{n+1}\to\R$ 
  such that $\int_{\R^{n+1}}u\,dx=1$ and 
  there exists a smooth positive solution of the problem
  $$
  \begin{cases}
    v_t=-\Delta v\, \text{ in $\R^{n+1}\times[0,\tau)$}\,,\\
    v(x,0)=u(x)\, \text{ for every $p\in\R^{n+1}$}\,.\\
  \end{cases}
  $$
  Then, we define the following quantity
  $$
  \sigma(\varphi,\tau)=\sup_{u\in{\mathcal{F}_\tau}} \sqrt{4\pi\tau}\int_{M}u\,d\mu\,.
  $$
\end{dfnz}

\begin{rem} 
The heat kernel $K_{\R^{n+1}}(x,p,\tau)=\frac{e^{-\frac{\vert
  x-p\vert^2}{4\tau}}}{(4\pi\tau)^{(n+1)/2}}$ of $\R^{n+1}$ at time $\tau>0$ and
  point $p\in\R^{n+1}$ clearly belongs to the family ${\mathcal
  F}_\tau$.
\end{rem}

It is immediate to see by this remark that 
the quantity $\sigma(\varphi,\tau)$ is positive and
  precisely, for every $p\in\R^{n+1}$ and $\tau>0$,
$$
\sigma(\varphi,\tau)\geq \sqrt{4\pi\tau}\int_{M}\frac{e^{-\frac{\vert
  x-p\vert^2}{4\tau}}}{(4\pi\tau)^{(n+1)/2}}\,d\mu(x)
=\int_{M}\frac{e^{-\frac{\vert
      x-p\vert^2}{4\tau}}}{(4\pi\tau)^{n/2}}\,d\mu(x)>0\,,
$$
which is the quantity of the ``classical'' Huisken's monotonicity
formula. Hence,
\begin{equation}
\sigma(\varphi,\tau)\geq \sup_{p\in\R^{n+1}}\int_{M}\frac{e^{-\frac{\vert
      x-p\vert^2}{4\tau}}}{(4\pi\tau)^{n/2}}\,d\mu(x)>0\,.
\end{equation}

We want to see that actually this inequality is an equality, that is,
we can take the $\sup$ only on heat kernels. Moreover, the $\sup$ is a
maximum.

We work out some properties of the functions
$u\in{\mathcal F}_\tau$.\\ 
We recall the integrated version of Li--Yau Harnack
inequality (see~\cite{liyau}).

\begin{prop}[Li--Yau integral Harnack inequality] 
Let $u:\R^{n+1}\times(0,T)\to\R$ be a smooth positive solution of heat
equation, then for every $0<t\leq s<T$ we have 
\begin{equation*}
u(x,t)\leq
u(y,s)\left(\frac{s}{t}\right)^{(n+1)/2}e^{\,\frac{\vert
    x-y\vert^2}{4(s-t)}}\,.
\end{equation*}
\end{prop}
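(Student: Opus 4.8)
The plan is to derive this integrated inequality from the pointwise Li--Yau gradient estimate and a one--parameter integration along straight segments in space--time. Set $N=n+1$ and, for a smooth positive solution $u$ of the heat equation $u_t=\Delta u$ on $\R^{N}\times(0,T)$, put $f=\log u$, which solves $f_t=\Delta f+\vert\nabla f\vert^2$. The core step is the sharp gradient estimate
\begin{equation*}
\vert\nabla f\vert^2-f_t\;\leq\;\frac{N}{2t}\qquad\text{on }\R^{N}\times(0,T)\,.
\end{equation*}
To prove it, note that $w:=\vert\nabla f\vert^2-f_t=-\Delta f$ and, using the Bochner formula $\Delta\vert\nabla f\vert^2=2\vert\nabla^2 f\vert^2+2\langle\nabla f,\nabla\Delta f\rangle$ (the Ricci term being zero in $\R^{N}$), a direct computation gives $(\partial_t-\Delta)w=-2\vert\nabla^2 f\vert^2+2\langle\nabla f,\nabla w\rangle\leq-\frac{2}{N}w^2+2\langle\nabla f,\nabla w\rangle$, where we used $\vert\nabla^2 f\vert^2\geq\frac1N(\Delta f)^2=\frac1N w^2$. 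Passing to $F=tw$ one finds $(\partial_t-\Delta)F\leq\frac{1}{t}F-\frac{2}{Nt}F^2+2\langle\nabla f,\nabla F\rangle$, so at an interior space--time maximum (where $\nabla F=0$, $\Delta F\leq0$, $\partial_t F\geq0$) one gets $0\leq\frac1t F-\frac{2}{Nt}F^2$, hence $F\leq N/2$, which is the claimed estimate.

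The delicate point is that $\R^{N}$ is non--compact, so the maximum principle cannot be invoked directly: I would localize with a cut--off function $\psi$ supported on balls of radius $R$, run the argument for $\psi F$ (picking up lower--order error terms controlled by $\vert\nabla\psi\vert$, $\vert\Delta\psi\vert$), and let $R\to\infty$; this is exactly Li--Yau's original scheme. Alternatively, since every $u\in\mathcal F_\tau$ arises from an $L^1$ initial datum and hence has controlled behaviour at infinity, one can argue more directly. This localization step is where essentially all of the real work lies.

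Granting the gradient estimate, the remaining step is elementary. Fix $x,y\in\R^{N}$ and $0<t\leq s<T$, and let $\gamma(\rho)=x+\frac{\rho-t}{s-t}(y-x)$ for $\rho\in[t,s]$, a constant--speed segment with $\vert\dot\gamma\vert=\vert x-y\vert/(s-t)$ joining $\gamma(t)=x$ to $\gamma(s)=y$. Then
\begin{equation*}
\frac{d}{d\rho}f(\gamma(\rho),\rho)=\langle\nabla f,\dot\gamma\rangle+f_\rho\;\geq\;\langle\nabla f,\dot\gamma\rangle+\vert\nabla f\vert^2-\frac{N}{2\rho}\;\geq\;-\frac14\vert\dot\gamma\vert^2-\frac{N}{2\rho}\,,
\end{equation*}
the last inequality being just $\vert\nabla f+\tfrac12\dot\gamma\vert^2\geq0$. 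Integrating over $[t,s]$ yields $\log u(y,s)-\log u(x,t)\geq-\frac{\vert x-y\vert^2}{4(s-t)}-\frac{N}{2}\log\frac st$, and exponentiating gives precisely $u(x,t)\leq u(y,s)(s/t)^{N/2}e^{\vert x-y\vert^2/4(s-t)}$. As a consistency check, both the gradient estimate and the final inequality become equalities for the Euclidean heat kernel, with optimum attained along the lines $sx=ty$, confirming that all constants are sharp.
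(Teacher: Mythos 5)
Your argument is correct and is precisely the classical Li--Yau derivation that the paper itself does not reproduce but simply cites from \cite{liyau}: the pointwise gradient estimate $\vert\nabla\log u\vert^2-\partial_t\log u\leq\frac{n+1}{2t}$ followed by integration of $\log u$ along a constant--speed space--time segment, completing the square against $\frac12\dot\gamma$. All the computations check out (including the dimensional constant $N=n+1$ and the sharpness check on the heat kernel), and you rightly identify the cut--off/localization needed to run the maximum principle on the non--compact $\R^{n+1}$ as the only genuinely technical step, which is handled exactly as you sketch in Li--Yau's original paper.
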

Since the functions $v:\R^{n+1}\times[0,\tau)\to\R$ associated to any
$u\in{\mathcal F}_\tau$ are positive solutions of the backward heat equation, 
such inequality reads, for $0\leq s\leq t<\tau$,
\begin{equation*}
v(x,t)\leq
v(y,s)\left(\frac{\tau-s}{\tau-t}\right)^{(n+1)/2}e^{\,\frac{\vert
    x-y\vert^2}{4(t-s)}}\,.
\end{equation*}
This estimate, together with the uniqueness theorem for positive
solution of the heat equation (see again~\cite{liyau}), implies that
the function $u=v(\cdot,0)$ is obtained by convolution of the function
$v(\cdot,t)$ with the {\em forward} heat kernel at time $t>0$. This fact
implies that the condition $\int_{\R^{n+1}}v(x,t)\,dx=1$ holds for
every $t\in[0,\tau)$, and that every derivative of every function $v$
is bounded in the strip $[0,\tau-\varepsilon]$, for every
$\varepsilon>0$.

The functions $v(\cdot,t)$ weakly$^*$
  converge as probability measures, as $t\to\tau$, to some positive
  unit measure $\lambda$ on $\R^{n+1}$ such that
\begin{equation}\label{meas}
v(x,t)=\int_{\R^{n+1}}\frac{e^{-\frac{\vert
      x-y\vert^2}{4(\tau-t)}}}{[4\pi(\tau-t)]^{(n+1)/2}}\,d\lambda(y)\,.
\end{equation}
Conversely, every probability
  measure $\lambda$, by convolution with the heat kernel, gives rise to
  a function $v$ such that $v(\cdot,\tau)\in{\mathcal F}_\tau$, 
the most interesting case being $\lambda=\delta_p$ for
$p\in\R^{n+1}$.\\
Indeed, we know that for every $t\in[0,\tau)$ and $s\in(t,\tau)$,
$$
v(x,t)=\int_{\R^{n+1}}v(y,s)\frac{e^{\frac{\vert
      x-y\vert^2}{4(t-s)}}}{[4\pi(s-t)]^{(n+1)/2}}\,dx
$$
hence, choosing a sequence of times $s_i\nearrow\tau$ such that the
measures $v(\cdot,s_i){\mathcal L}^{n+1}$ weakly$^*$ converge to
some measure $\lambda$, we get equality~\eqref{meas}, 
since $\frac{e^{\frac{\vert x-y\vert^2}{4(t-s)}}}{[4\pi(s-t)]^{(n+1)/2}}$
converges uniformly to $\frac{e^{-\frac{\vert
      x-y\vert^2}{4(\tau-t)}}}{[4\pi(\tau-t)]^{(n+1)/2}}$ on
$\R^{n+1}$, as $s\to\tau$.\\
This representation formula also implies that the limit measure
$\lambda$ is unique and that actually
$\lim_{s\to\tau}v(\cdot,s){\mathcal L}^{n+1}=\lambda$ in the weak$^*$
convergence of measures on $\R^{n+1}$.\\
Finally, we show that $\vert\lambda\vert=1$. This follows by
Fubini--Tonelli's theorem for positive product measures, as
$\int_{\R^{n+1}}u(x)\,dx=1$,
\begin{align*}
1=&\,\int_{\R^{n+1}}u(x)\,dx=\int_{\R^{n+1}}\int_{\R^{n+1}}
\frac{e^{-\frac{\vert
    x-y\vert^2}{4\tau}}}{{[4\pi\tau]^{(n+1)/2}}}\,d\lambda(y)\,dx\\
=&\,\int_{\R^{n+1}}\int_{\R^{n+1}}\frac{e^{-\frac{\vert
    x-y\vert^2}{4\tau}}}{{[4\pi\tau]^{(n+1)/2}}}\,dx\,d\lambda(y)\\
=&\,\int_{\R^{n+1}}\,d\lambda(y)=\vert\lambda\vert\,.
\end{align*}
By this discussion it follows that the family ${\mathcal F}_\tau$
consists of
\begin{equation*}
u(x,t)=\int_{\R^{n+1}}\frac{e^{-\frac{\vert
      x-y\vert^2}{4\tau}}}{[4\pi\tau]^{(n+1)/2}}\,d\lambda(y)
\end{equation*}
where $\lambda$ varies among the convex set of Borel probability measures on
$\R^{n+1}$ (which is weak$^*$--compact).

A consequence of this fact is that since the integral
$\sqrt{4\pi\tau}\int_{M}u\,d\mu$ is a linear functional in the
function $u$, the $\sup$ in defining
$\sigma(\varphi,\tau)$ 
can be taken considering only the extremal points of the 
above convex, which are the {\em delta} measures in $\R^{n+1}$.
Consequently, the functions $u$ to be considered can be restricted to
be heat kernels at time $\tau>0$.\\

It is then easy to conclude that as the hypersurface $M$ is compact in
$\R^{n+1}$, the $\sup$ is actually a maximum.

\begin{prop}\label{propo} The quantity $\sigma(\varphi,\tau)$ is given by
$$
  \sigma(\varphi,\tau)=\max_{p\in\R^{n+1}} \int_{M}\frac{e^{-\frac{\vert
      x-p\vert^2}{4\tau}}}{(4\pi\tau)^{n/2}}\,d\mu(x)\,.
$$
\end{prop}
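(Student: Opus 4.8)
The plan is to take advantage of the structural description of $\mathcal{F}_\tau$ obtained in the discussion above: every $u\in\mathcal{F}_\tau$ is the convolution of the heat kernel at time $\tau$ with a Borel probability measure $\lambda$ on $\Ri$, and conversely every such convolution belongs to $\mathcal{F}_\tau$. Writing $u=u_\lambda$ accordingly, the first step is to express the functional defining $\sigma$ as an integral against $\lambda$. Since all the integrands involved are non-negative, Fubini--Tonelli's theorem gives
$$
\sqrt{4\pi\tau}\int_{M}u_\lambda\,d\mu
=\int_{\Ri}\Bigl(\int_{M}\frac{e^{-\frac{\vert x-y\vert^2}{4\tau}}}{(4\pi\tau)^{n/2}}\,d\mu(x)\Bigr)\,d\lambda(y)
=\int_{\Ri}g(y)\,d\lambda(y)\,,
$$
where $g(y):=\int_{M}e^{-\vert x-y\vert^2/4\tau}(4\pi\tau)^{-n/2}\,d\mu(x)$. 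One checks immediately that $g$ is a smooth, strictly positive function on $\Ri$, bounded and with $g(y)\to0$ as $\vert y\vert\to\infty$, because $M$ is compact; in particular $g$ attains its maximum at some point $p_0\in\Ri$.

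The second step is then purely formal. By the representation just recalled,
$$
\sigma(\varphi,\tau)=\sup_{\lambda}\int_{\Ri}g(y)\,d\lambda(y)\,,
$$
the supremum being taken over all Borel probability measures $\lambda$ on $\Ri$. For any such $\lambda$ one has $\int_{\Ri}g\,d\lambda\le g(p_0)$, with equality for $\lambda=\delta_{p_0}$; hence $\sigma(\varphi,\tau)=g(p_0)=\max_{p\in\Ri}g(p)$, which is the asserted formula. Equivalently, one may argue that the set of probability measures is convex and weak$^*$--compact, that the functional $\lambda\mapsto\int_{\Ri}g\,d\lambda$ is affine and weak$^*$--continuous, hence attains its maximum at an extreme point of this set, and that the extreme points are exactly the Dirac masses $\delta_p$, on which the functional equals $g(p)$.

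There is no real obstacle here: once the identification of $\mathcal{F}_\tau$ with convolutions of probability measures is in hand, the argument reduces to the elementary remark that a weak$^*$--continuous linear functional on the probability measures is maximized on Diracs. The only points requiring a line of justification are the interchange of integrals (legitimate by positivity) and the passage from $\sup$ to $\max$, which uses the continuity of $g$ together with its decay at infinity coming from the compactness of $M$.
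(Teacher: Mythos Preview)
Your proof is correct and follows essentially the same route as the paper: use the identification of $\mathcal{F}_\tau$ with convolutions of probability measures, reduce the supremum to Dirac masses (either by your direct bound $\int g\,d\lambda\le\max g$ or by the extreme--point argument, both of which you give), and invoke compactness of $M$ to pass from $\sup$ to $\max$. Your explicit introduction of the function $g$ and its decay at infinity makes the last step slightly cleaner than the paper's one--line remark, but the substance is the same.
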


We have also easily that
\begin{equation*}
\sigma(\varphi,\tau)=\sup_{p\in\R^{n+1}}\int_{M}\frac{e^{-\frac{\vert
      x-p\vert^2}{4\tau}}}{(4\pi\tau)^{n/2}}\,d\mu(x)\leq
\int_{M}\frac{1}{(4\pi\tau)^{n/2}}\,d\mu(x)\leq
\frac{\mathrm{Area}(M)}{(4\pi\tau)^{n/2}}\,.
\end{equation*}

\begin{prop}[Rescaling Invariance] For every $\lambda>0$ we have
  $$
  \sigma(\lambda \varphi,{\lambda}^2\tau)=\sigma(\varphi,\tau)\,.
  $$
\label{rescprop}
\end{prop}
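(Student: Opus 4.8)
The plan is to deduce this directly from the characterization of $\sigma$ in Proposition~\ref{propo}, rather than working with the whole family ${\mathcal F}_\tau$: since that proposition already reduces the supremum to one over heat kernels, it suffices to track how the single scalar integral $\int_M e^{-|x-p|^2/4\tau}(4\pi\tau)^{-n/2}\,d\mu$ transforms under the parabolic rescaling $(\varphi,\tau)\mapsto(\lambda\varphi,\lambda^2\tau)$.

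First I would record the effect of the homothety $x\mapsto\lambda x$ on the geometric data: if $\varphi:M\to\R^{n+1}$ induces the measure $d\mu$, then $\lambda\varphi$ induces the measure $\lambda^n\,d\mu$, because $M$ is $n$--dimensional and the induced metric scales by $\lambda^2$. Then, writing out $\sigma(\lambda\varphi,\lambda^2\tau)$ via Proposition~\ref{propo} as a maximum over $q\in\R^{n+1}$ of $\int_M e^{-|\lambda x-q|^2/4\lambda^2\tau}(4\pi\lambda^2\tau)^{-n/2}\lambda^n\,d\mu(x)$, I would perform the change of variable $q=\lambda p$ in the maximization. In the exponent $|\lambda x-\lambda p|^2/4\lambda^2\tau=|x-p|^2/4\tau$, and in the prefactor $(4\pi\lambda^2\tau)^{-n/2}\lambda^n=(4\pi\tau)^{-n/2}$, so the integrand collapses exactly to $e^{-|x-p|^2/4\tau}(4\pi\tau)^{-n/2}\,d\mu(x)$ and the maximum over $p$ is precisely $\sigma(\varphi,\tau)$ again.

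There is essentially no obstacle here: the statement is a bookkeeping exercise in the scaling weights, and the only point requiring a word of care is that the map $q\mapsto p=q/\lambda$ is a bijection of $\R^{n+1}$, so that taking the maximum over $q$ is the same as taking it over $p$; the compactness of $M$ (already invoked for Proposition~\ref{propo}) guarantees the maxima are attained, so no limiting argument is needed. If one prefers an argument that does not route through Proposition~\ref{propo}, the same rescaling can instead be applied at the level of ${\mathcal F}_\tau$: a backward heat solution $v(x,t)$ on $\R^{n+1}\times[0,\tau)$ corresponds to $\lambda^{-(n+1)}v(x/\lambda,t/\lambda^2)$ on $\R^{n+1}\times[0,\lambda^2\tau)$, which preserves unit mass and the defining PDE, and an identical change of variables in $\sqrt{4\pi\lambda^2\tau}\int_M u\,d(\lambda\mu)$ yields the claim; but the first route is shorter and I would present that one.
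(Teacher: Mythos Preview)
Your argument is correct, but it is not the route the paper takes. The paper proves the rescaling invariance directly from the definition of $\sigma$ via ${\mathcal F}_\tau$: given $u\in{\mathcal F}_\tau$ with associated backward solution $v$, it sets $\widetilde{u}(y)=\lambda^{-(n+1)}u(y/\lambda)$ and $\widetilde{v}(y,s)=\lambda^{-(n+1)}v(y/\lambda,s/\lambda^2)$, checks that $\widetilde{u}\in{\mathcal F}_{\lambda^2\tau}$, and verifies $\sqrt{4\pi\lambda^2\tau}\int_M\widetilde{u}\,d\mu_{\lambda\varphi}=\sqrt{4\pi\tau}\int_M u\,d\mu_\varphi$. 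This is exactly the alternative you sketch at the end of your proposal. Your primary argument instead invokes Proposition~\ref{propo} first, reducing to the heat--kernel case, and then does the one--line parabolic scaling of the explicit integral. Since Proposition~\ref{propo} is proved before Proposition~\ref{rescprop} in the paper, there is no circularity, and your route is shorter. The paper's approach has the minor conceptual advantage of showing that the invariance is built into the definition itself (it would survive even if the reduction to heat kernels were unavailable, e.g.\ in an ambient where the Li--Yau/representation discussion is less clean), whereas yours makes the computation completely transparent once one knows the sup is over Gaussians.
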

\begin{proof}
Let $u\in{\mathcal F}_\tau$ with associate solution of backward heat
equation $v:\R^{n+1}\times[0,\tau)\to\R$ and consider the rescaled function
$\widetilde{u}(y)=u(y/\lambda)\lambda^{-(n+1)}$.\\
It is easy to see that
$$
\int_{\R^{n+1}}\widetilde{u}(y)\,dy=\lambda^{-(n+1)}\int_{\R^{n+1}}u(y/\lambda)\,dy=
\int_{\R^{n+1}}u(x)\,dx=1
$$
with the change of variable $x=\lambda^{-(n+1)}y$, moreover the
function $\widetilde{v}(y,s)=v(y/\lambda,s/\lambda^2)\lambda^{-(n+1)}$ is a positive 
solution of the backward heat equation on the time interval
$\lambda^2\tau$, hence $\widetilde{u}\in{\mathcal
  F}_{\lambda^2\tau}$.\\
It is now a straightforward computation to see that
$$
\sqrt{4\pi\lambda^2\tau}\int_{M}\widetilde{u}\,d\mu_{\lambda\varphi}=
\sqrt{4\pi\tau}\int_{M}u\,d\mu_{\varphi}\,,
$$
for every smooth immersion of
a compact hypersurface $\varphi:M\to\R^{n+1}$. The statement clearly follows.
\end{proof}

By formula~\eqref{huiskent}, as the second term vanishes when $v$ is a
backward heat kernel, it follows that if $\varphi:M\times[0,T)\to\R^{n+1}$ is the MCF of a compact
hypersurface $M$, we have
\begin{align*}
  \frac{d\,}{dt}&\,\Bigl[\,\sqrt{2(C-t)}\int_{M} K_{\R^{n+1}}(x,p,C-t)\,d\mu_t(x)\,\Bigr]\\
  &\,=-\sqrt{2(C-t)}\int_{M}\,K_{\R^{n+1}}(x,p,C-t)\vert\HHH+\langle x-p\,\vert\,\nu\rangle/2(C-t)\vert^2\,d\mu_t(x)\nonumber
\end{align*}
which is clearly negative in the time interval $[0,\min\{C,T\})$.

\begin{prop}[Monotonicity and Differentiability]\label{monotsigma} 
  Along a MCF, $\varphi:M\times[0,T)\to\R^{n+1}$, if $\tau(t)=C-t$ for
  some constant $C>0$, the quantity 
  $\sigma(\varphi_t,\tau)$ is monotone nonincreasing in the time
  interval $[0,\min\{C,T\})$, hence it is differentiable almost
  everywhere.\\
  Moreover, letting $p_\tau$ a point in $\R^{n+1}$ such that
  $K_{\R^{n+1}}(x,p_\tau,\tau)$ is one of maximizer for
  $\sigma(\varphi_t,\tau(t))$ of Proposition~\ref{propo}, we have for almost every
  $t\in[0,\min\{C,T\})$,
  \begin{equation}
    \frac{d\,}{dt}\sigma(\varphi_t,\tau)\leq-\int_{M}\frac{e^{-\frac{\vert
      x-p_\tau\vert^2}{4\tau}}}{(4\pi\tau)^{n/2}}\left\vert\HHH+\frac{\langle x-p_\tau\,\vert\,\nu\rangle}{2\tau}
\right\vert^2\,d\mu_t
  \end{equation}
  or, since this inequality has to be intended in distributional
  sense, for every $0\leq r<t\leq\min\{C,T\}$,
  \begin{equation}
    \sigma(\varphi_r,\tau(r))-\sigma(\varphi_t,\tau(t))
    \geq \int_r^t\int_{M}\frac{e^{-\frac{\vert
      x-p_{\tau(s)}\vert^2}{4\tau(s)}}}{(4\pi\tau(s))^{n/2}}
\left\vert\HHH+\frac{\langle x-p_{\tau(s)}\,\vert\,\nu\rangle}{2\tau(s)}
\right\vert^2\,d\mu_s\,ds
\end{equation}
\end{prop}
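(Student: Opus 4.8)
The plan is to reduce the whole statement to Huisken's formula~\eqref{monfor} applied to individual heat kernels, through Proposition~\ref{propo}. Write $\tau(t)=C-t$ and, for $p\in\R^{n+1}$, set
$$
f_p(t):=\int_{M}\frac{e^{-\frac{\vert x-p\vert^2}{4\tau(t)}}}{(4\pi\tau(t))^{n/2}}\,d\mu_t(x)\,,\qquad g(t):=\sigma(\varphi_t,\tau(t))\,.
$$
By Proposition~\ref{propo}, $g(t)=\max_{p\in\R^{n+1}}f_p(t)$, and by~\eqref{monfor} each $f_p$ is differentiable on $[0,\min\{C,T\})$ with
$$
f_p'(t)=-\int_{M}\frac{e^{-\frac{\vert x-p\vert^2}{4\tau(t)}}}{(4\pi\tau(t))^{n/2}}\Bigl\vert\HHH+\frac{\langle x-p\,\vert\,\nu\rangle}{2\tau(t)}\Bigr\vert^2\,d\mu_t\le 0\,.
$$

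The monotonicity of $g$ then comes for free: a pointwise supremum of nonincreasing functions is nonincreasing, since for $r<t$ and every $p$ one has $g(r)\ge f_p(r)\ge f_p(t)$, hence $g(r)\ge\sup_p f_p(t)=g(t)$. A monotone function is of bounded variation and differentiable almost everywhere, which gives the differentiability claim and makes the remaining (distributional) assertions meaningful.

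The core of the argument is the pointwise derivative bound. Fix an interior point $t_0$ at which $g'(t_0)$ exists, and let $p_0:=p_{\tau(t_0)}$ realize the maximum in Proposition~\ref{propo} at time $t_0$, so that $g(t_0)=f_{p_0}(t_0)$ while $g(t)\ge f_{p_0}(t)$ for all $t$. The function $\phi:=g-f_{p_0}$ is then nonnegative, vanishes at $t_0$, and is differentiable there (both $g$ and $f_{p_0}$ are); thus $t_0$ is an interior minimum of $\phi$ and $\phi'(t_0)=0$, i.e. $g'(t_0)=f_{p_0}'(t_0)$. Substituting the formula for $f_{p_0}'$ yields precisely the stated estimate (in fact with equality, which is stronger than needed). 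I expect this passage --- from ``$p_0$ attains the maximum at $t_0$'' to ``$g$ has the same derivative as $f_{p_0}$ at almost every $t_0$'' --- to be the only step that is not purely formal; everything around it is soft.

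For the integrated form I would invoke that a nonincreasing $g$ satisfies $g(r)-g(t)\ge\int_r^t\bigl(-g'(s)\bigr)\,ds$ for $0\le r<t\le\min\{C,T\}$. By the previous step, for almost every $s$ one has
$$
-g'(s)=-f_{p_{\tau(s)}}'(s)=\int_{M}\frac{e^{-\frac{\vert x-p_{\tau(s)}\vert^2}{4\tau(s)}}}{(4\pi\tau(s))^{n/2}}\Bigl\vert\HHH+\frac{\langle x-p_{\tau(s)}\,\vert\,\nu\rangle}{2\tau(s)}\Bigr\vert^2\,d\mu_s\,;
$$
in particular this last integrand, viewed as a function of $s$, agrees almost everywhere with the measurable function $-g'$, so it is measurable and the substitution is legitimate, giving the claimed inequality between $\sigma(\varphi_r,\tau(r))-\sigma(\varphi_t,\tau(t))$ and the double integral. (One could further observe that on a compact time subinterval the maximizers $p_{\tau(s)}$ stay in a fixed compact set, so that $g$ is locally Lipschitz, the inequality for monotone functions becomes an equality, and the integrated statement is in fact an identity.)
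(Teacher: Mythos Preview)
Your proof is correct and follows the same approach as the paper, which simply records that the maximum of nonincreasing smooth functions is nonincreasing (hence differentiable a.e.) and then invokes ``Hamilton's trick'' to exchange derivative and supremum. What you have written is precisely an explicit unpacking of Hamilton's trick: at a point where the supremum $g$ is differentiable, comparing with the maximizer $f_{p_0}$ shows $g-f_{p_0}$ has an interior minimum at $t_0$, forcing the derivatives to coincide.
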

\begin{proof} As the function $\sigma(\varphi_t,\tau)$ is the maximum
of monotone nonincreasing smooth functions, it also must be monotone
nonincreasing, hence, differentiable at almost every time
$t\in[0,\min\{C,T\})$.\\
The last assertion is standard, using {\em Hamilton's trick}
  (see~\cite{hamilton2}) to 
  exchange the $\sup$ and derivative operations.
\end{proof}

\begin{rem}
Notice that the quantity $\sigma$ can be defined also for any
$n$--dimensional countably rectifiable subset $S$ 
of $\R^{n+1}$, by substituting in the definition the
term $\int_M u\,d\mu$ with $\int_S u\,d{\mathcal H}^n$, where
${\mathcal H}^n$ is the $n$--dimensional Hausdorff measure (possibly {\em counting
  multiplicities}). If then $S$ is the support of a compact rectifiable {\em
  varifold}, with finite ${\mathrm {Area}}$, moving by mean curvature
according to Brakke's definition (see~\cite{brakke}),
Huisken's monotonicity formula~\eqref{huiskent} holds,
hence, also this proposition.
\end{rem}

\begin{dfnz}\label{sigdef}
We define, in the same hypothesis, for $\tau=C-t$ with $C\leq T$,
$$
\Sigma(C)=\lim_{t\to C^-}\sigma(\varphi_t,\tau)\,,
$$
and $\Sigma=\Sigma(T)$.
\end{dfnz}

By the previous discussion, 
$\Sigma\geq\sup_{p\in\R^{n+1}}{\Theta}(p)$, where this latter 
quantity is defined as 
\begin{equation}\label{theta}
\Theta(p)=\lim_{t\to T^-}\int_M\frac{e^{-\frac{\vert
  x-p\vert^2}{4(T-t)}}}{[4\pi(T-t)]^{n/2}}\,d\mu_t(x)\,,
\end{equation}
the existence of this limit for every $p\in\R^{n+1}$ is an obvious consequence
of Huisken's monotonicity formula.\\
Moreover, it is easy to see also the existence 
of $\max_{p\in\R^{n+1}}{\Theta}(p)$.

\begin{dfnz} Let $\varphi:M\to\R^{n+1}$ be a smooth, compact, immersed
  hypersurface. Then we define 
$$
\nu(\varphi)=\sup_{\tau>0}\sigma(\varphi,\tau)\,.
$$
\end{dfnz}

\begin{prop}\label{nudef}
The quantity $\nu(\varphi)$ is finite and actually reached by some
$\tau_\varphi$.
\end{prop}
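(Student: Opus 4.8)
The plan is to work with the representation of Proposition~\ref{propo}. Writing $F(p,\tau)=\int_M\frac{e^{-|x-p|^2/4\tau}}{(4\pi\tau)^{n/2}}\,d\mu(x)$, we have $\sigma(\varphi,\tau)=\max_{p\in\R^{n+1}}F(p,\tau)$, hence $\nu(\varphi)=\sup_{\tau>0}\max_{p\in\R^{n+1}}F(p,\tau)$. First I would record two soft facts. Since $M$ is compact, say $M\subset B_R(0)$, one has $F(p,\tau)\le \mathrm{Area}(M)(4\pi\tau)^{-n/2}e^{-(|p|-R)^2/4\tau}$ for $|p|\ge R$, so $F(p,\tau)\to0$ as $|p|\to\infty$ uniformly for $\tau$ in any compact subinterval of $(0,\infty)$; therefore on such a subinterval the maximum in $p$ is realized in a fixed ball, and since $F$ is jointly continuous on $\R^{n+1}\times(0,\infty)$, the function $\tau\mapsto\sigma(\varphi,\tau)$ is continuous on $(0,\infty)$. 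The same crude bound with $p$ arbitrary gives $\sigma(\varphi,\tau)\le\mathrm{Area}(M)(4\pi\tau)^{-n/2}\to0$ as $\tau\to+\infty$. Thus a maximizing sequence $\tau_k$ for $\nu(\varphi)$ cannot run off to $+\infty$, and if it stays in a compact subinterval of $(0,\infty)$ the supremum is attained by continuity; the only case left to exclude is $\tau_k\to0^+$.

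The second step is the behaviour as $\tau\to0^+$. Here I would use the elementary area estimate for a smooth compact immersed hypersurface: there are $r_0,C_0>0$, depending only on $\varphi$, with $\mu(\varphi^{-1}(B_r(q)))\le C_0 r^n$ for every $q\in\R^{n+1}$ and $0<r\le r_0$. Decomposing $M$ into the dyadic annuli $\{2^{k-1}\sqrt\tau\le|x-p|<2^k\sqrt\tau\}$, bounding the Gaussian weight on each one, and summing, this yields a constant $C_1$ independent of $p$ and of $\tau\in(0,1]$ with $\sigma(\varphi,\tau)\le C_1$; together with the first paragraph this already gives $\nu(\varphi)<+\infty$. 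Refining the computation — on the inner ball $B_{\tau^{1/4}}(p)$ the surface consists of finitely many almost flat sheets whose contributions are controlled by Gaussian integrals over hyperplanes, and the number of sheets meeting that ball is controlled, as $\tau\to0$, by the upper semicontinuity of the multiplicity function $p\mapsto\Theta_M(p)$, the $n$-dimensional density of $M$, an integer $\ge1$ on $\varphi(M)$ — one obtains $\lim_{\tau\to0^+}\sigma(\varphi,\tau)=\Theta_{\max}:=\max_{p\in\R^{n+1}}\Theta_M(p)$.

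It then remains to check the strict inequality $\nu(\varphi)>\Theta_{\max}$: once this is known, $\tau\mapsto\sigma(\varphi,\tau)$ is a continuous positive function on $(0,\infty)$ whose limits $\Theta_{\max}$ at $0^+$ and $0$ at $+\infty$ both lie strictly below its supremum, so the supremum is attained at some $\tau_\varphi\in(0,\infty)$, which is the claim. The strict inequality can be seen either by a short expansion, $F(p_*,\tau)=\Theta_{\max}+O(\tau)$ at a point $p_*$ of maximal multiplicity, together with the fact that extra, non-local Gaussian mass makes $\sigma$ overshoot $\Theta_{\max}$ at an intermediate scale — exactly as for the round sphere of radius $\rho$, where the contribution centred at the centre exceeds $1$ near $\tau\sim\rho^2/(2n)$ — or from Proposition~\ref{monotsigma}: $\nu(\varphi)\ge\sigma(\varphi,T)\ge\Sigma(T)=\Sigma$ (Definition~\ref{sigdef}), and $\Sigma$ dominates the Huisken density $\Theta(p)$ at a singular point of the mean curvature flow of $M$, which is strictly larger than the maximal multiplicity of $M$.

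I expect the genuinely delicate part to be exactly this last regime $\tau\to0^+$: the uniform-in-$p$ density estimate and the semicontinuity argument pinning down $\lim_{\tau\to0^+}\sigma(\varphi,\tau)$, and then the verification that $\nu(\varphi)$ lies strictly above it, i.e. that the supremum does not ``escape to scale zero''. Everything else — continuity in $\tau$, the bound as $\tau\to+\infty$, and the final compactness argument — is routine.
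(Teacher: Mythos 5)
Your skeleton is the same as the paper's: identify the limits of $\sigma(\varphi,\tau)$ as $\tau\to0^+$ (the maximal density $\Theta(\varphi)$) and as $\tau\to+\infty$ (zero), and conclude by continuity. For the large-$\tau$ decay the paper argues via the rescaling invariance of Proposition~\ref{rescprop} together with the uniform bound $0\leq u\leq(4\pi)^{-(n+1)/2}$ for $u\in{\mathcal F}_1$, whereas you use the crude estimate $F(p,\tau)\leq\mathrm{Area}(M)(4\pi\tau)^{-n/2}$; this is exactly the displayed inequality the paper records right after Proposition~\ref{propo}, so the two are interchangeable. Your dyadic-annuli bound for small $\tau$ and the joint-continuity argument are fine and in fact supply details (uniformity in $p$, finiteness near $\tau=0$) that the paper leaves implicit.

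Where you genuinely diverge is in recognizing that the two limits alone do not give attainment: if $\sigma(\varphi,\tau)<\Theta(\varphi)$ for all $\tau>0$, the supremum would only be approached as $\tau\to0^+$ and no $\tau_\varphi>0$ would realize it. The paper's proof simply does not address this, so your flagging of the strict inequality $\nu(\varphi)>\Theta(\varphi)$ as the delicate point is a real improvement rather than a deviation. That said, neither of your two proposed verifications is yet a proof. The first (``extra non-local Gaussian mass makes $\sigma$ overshoot at an intermediate scale'') is a heuristic modelled on the sphere; for a general compact hypersurface the inequality $\sup_{p,\tau}F(p,\tau)>\Theta(\varphi)$ is a nontrivial fact (it is essentially the strict entropy lower bound for closed hypersurfaces), and an expansion $F(p_*,\tau)=\Theta_{\max}+O(\tau)$ does not by itself control the sign of the correction. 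The second (run the flow to its singular time, use $\nu(\varphi)\geq\sigma(\varphi_0,T)\geq\Sigma$ and $\Sigma>1$) does work, but it imports White's local regularity theorem and the $\Sigma>1$ discussion from Section~2.2 of the paper, and as stated it only covers the embedded case, where the maximal multiplicity is $1$; for a genuinely immersed $\varphi$ with $\Theta(\varphi)=m>1$ you would still need to show that the Gaussian density at a singular point exceeds $m$, which is not claimed anywhere. So: same route as the paper, a correctly identified gap in the attainment step, but the filling of that gap should be made precise (and its scope — embedded versus immersed — stated) before the argument is complete.
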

\begin{proof}
Indeed, we have 
$$
\lim_{\tau\to0^+}\sigma(\varphi,\tau)=\Theta(\varphi)>0\,,
$$
where $\Theta(\varphi)$ is the maximum (which clearly exists as $M$ is
compact) of the $n$--dimensional density of $\varphi(M)$ in
$\R^{n+1}$. Then, if $\varphi$ is an embedding, $\Theta(\varphi)=1$,
otherwise it will be the highest multiplicity of the points of
$\varphi(M)$.\\
We show then that
$$
\lim_{\tau\to+\infty}\sigma(\varphi,\tau)=0\,.
$$
By the rescaling property of $\sigma$, we have 
$\sigma(\varphi,\tau)=\sigma(\varphi/\sqrt{4\pi\tau},1/4\pi)$, hence we need to show that
$$
\limsup_{\tau\to+\infty}\sup_{u\in{\mathcal{F}_1}}\int_{\frac{M}{\sqrt{4\pi\tau}}}u\,d\mu=0\,.
$$
But we already saw that any function $u\in{\mathcal{F}_1}$ satisfies
$0\leq u(x)\leq \frac{1}{(4\pi)^{(n+1)/2}}$ hence, 
\begin{equation*}
\limsup_{\tau\to+\infty}\sup_{u\in{\mathcal{F}_1}}\int_{\frac{M}{\sqrt{4\pi\tau}}}u\,d\mu
\leq \limsup_{\tau\to+\infty}\frac{{\mathrm{Vol}}(M/\sqrt{4\pi\tau})}
{(4\pi)^{(n+1)/2}}=\limsup_{\tau\to+\infty}\frac{{\mathrm{Vol}}(M)}
{(4\pi)^{(2n+1)/2}}\,\tau^{-n/2}=0\,.
\end{equation*}
\end{proof}

The following statement can be proved by the same argument of the proof of
Proposition~\ref{monotsigma}.

\begin{prop}[Monotonicity and Differentiability -- II]
  Along a MCF, $\varphi:M\times[0,T)\to\R^{n+1}$, 
  the quantity above $\nu(\varphi_t)$ is monotone non increasing in the time
  interval $[0,T)$, hence it is differentiable almost
  everywhere.\\
  Moreover, letting $p_\varphi\in\R^{n+1}$ and $\tau_\varphi$ to be 
  some of the maximizers whose
  existence is granted by Propositions~\ref{propo} and~\ref{nudef}, we have for almost every $t\in[0,T)$,
  \begin{equation}
    \frac{d\,}{dt}\nu(\varphi_t)\leq -\int_{M}\frac{e^{-\frac{\vert
      x-p_{\varphi_t}\vert^2}{4\tau_{\varphi_t}}}}{(4\pi\tau_{\varphi_t})^{n/2}}
\left\vert\HHH+\frac{\langle x-p_{\varphi_t}\,\vert\,\nu\rangle}{2\tau_{\varphi_t}}\right\vert^2\,d\mu_t(x)
  \end{equation}
  or, since this inequality has to be intended in distributional
  sense, for every $0\leq r<t<T$,
  \begin{equation}
      \nu(\varphi_r)-\nu(\varphi_t)
    \geq   \int_r^t\int_{M}\frac{e^{-\frac{\vert
      x-p_{\varphi_s}\vert^2}{4\tau_{\varphi_s}}}}{(4\pi\tau_{\varphi_s})^{n/2}}
\left\vert\HHH+\frac{\langle x-p_{\varphi_s}\,\vert\,\nu\rangle}{2\tau_{\varphi_s}}\right\vert^2\,d\mu_s(x)\,ds\,.
\end{equation}
\end{prop}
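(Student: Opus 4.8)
The plan is to carry out the argument of Proposition~\ref{monotsigma} ``one level higher'', representing $\nu(\varphi_t)$ as a supremum of classical, center-fixed, time-slipped Huisken functionals, each of which is smooth in $t$ and monotone with a derivative given by~\eqref{monfor}. Concretely, for $y\in\R^{n+1}$ and a collapsing time $C$ put
$$
g_{y,C}(t)=\int_M\frac{e^{-\frac{\vert x-y\vert^2}{4(C-t)}}}{(4\pi(C-t))^{n/2}}\,d\mu_t(x),
$$
which is defined and nonincreasing on $[0,\min\{C,T\})$, with $g_{y,C}'(t)=-\int_M\frac{e^{-\frac{\vert x-y\vert^2}{4(C-t)}}}{(4\pi(C-t))^{n/2}}\,\vert\HHH+\langle x-y\,\vert\,\nu\rangle/2(C-t)\vert^2\,d\mu_t$, by Huisken's formula~\eqref{monfor}. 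Using $C=t+\tau$ together with Propositions~\ref{propo} and~\ref{nudef}, one has $\nu(\varphi_t)=\sup\{\,g_{y,C}(t):y\in\R^{n+1},\ C>t\,\}$, and this supremum is attained.

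For the monotonicity I would argue directly: given $r<t<T$ and $\eps>0$, choose $(y,C)$ with $C>t$ and $g_{y,C}(t)>\nu(\varphi_t)-\eps$; since then $r,t\in[0,\min\{C,T\})$, monotonicity of the single function $g_{y,C}$ gives $\nu(\varphi_r)\ge g_{y,C}(r)\ge g_{y,C}(t)>\nu(\varphi_t)-\eps$, and letting $\eps\to0$ yields $\nu(\varphi_r)\ge\nu(\varphi_t)$. Being monotone nonincreasing, $\nu(\varphi_\cdot)$ is differentiable a.e.\ and satisfies $\nu(\varphi_r)-\nu(\varphi_t)\ge-\int_r^t\frac{d}{ds}\nu(\varphi_s)\,ds$.

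Next, the differential inequality. Fix $t_0\in(0,T)$ a point of differentiability of $\nu(\varphi_\cdot)$. By Proposition~\ref{nudef} there is $\tau_0:=\tau_{\varphi_{t_0}}>0$ with $\sigma(\varphi_{t_0},\tau_0)=\nu(\varphi_{t_0})$, and by Proposition~\ref{propo} a point $p_0:=p_{\varphi_{t_0}}$ attaining the maximum in $\sigma(\varphi_{t_0},\tau_0)$; set $C_0:=t_0+\tau_0$. Then $g_{p_0,C_0}(t_0)=\nu(\varphi_{t_0})$, while $g_{p_0,C_0}(t)\le\nu(\varphi_t)$ for every $t\in[0,\min\{C_0,T\})$, because $g_{p_0,C_0}(t)$ is one of the competitors in the supremum defining $\nu(\varphi_t)$. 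Hence $h:=\nu(\varphi_\cdot)-g_{p_0,C_0}$ is $\ge0$ with an \emph{interior} minimum at $t_0$ (here $\tau_0>0$ is used), and since $g_{p_0,C_0}$ is smooth near $t_0$ (the argument $C_0-t$ stays $\ge\tau_0/2$) while $\nu(\varphi_\cdot)$ is differentiable there, one gets $\frac{d}{dt}\nu(\varphi_{t_0})=g_{p_0,C_0}'(t_0)$. Substituting $C_0-t_0=\tau_0$ into the expression for $g_{y,C}'$ produces exactly the asserted pointwise (a.e.) inequality, in fact with equality; combining it with the bounded-variation inequality from the previous paragraph, whose integrand is integrable since it coincides a.e.\ with $-\frac{d}{ds}\nu(\varphi_s)$, yields the integrated statement.

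The only genuinely delicate point I expect is the construction of the comparison function: one has to make sure $g_{p_0,C_0}$ is a \emph{two-sided} smooth lower barrier touching $\nu(\varphi_\cdot)$ at $t_0$, which is precisely where the strict positivity of the optimal parameter $\tau_0$ (Proposition~\ref{nudef}: the maximum in $\tau$ is interior) and the freedom to place the collapsing time at $C_0=t_0+\tau_0$ enter. Everything else is the ``Hamilton's trick'' exchange of supremum and derivative already invoked for Proposition~\ref{monotsigma}, plus the standard passage from an almost-everywhere derivative bound to the integrated inequality for a monotone function.
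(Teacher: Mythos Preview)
Your proof is correct and follows essentially the same approach as the paper, which merely asserts that the statement ``can be proved by the same argument of the proof of Proposition~\ref{monotsigma}'' (namely, $\nu(\varphi_t)$ is a supremum of monotone nonincreasing smooth Huisken functionals, hence monotone and a.e.\ differentiable, with the derivative inequality then following from Hamilton's trick). You have simply spelled out the details---in particular the two--sided barrier construction via the strict positivity of the optimal $\tau_{\varphi_t}$ from Proposition~\ref{nudef}---that the paper leaves implicit.
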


\begin{rem} One can repeat all this analysis for a compact, immersed
  hypersurface of a flat Riemannian manifold $\TTT$. Moreover, if the
  original hypersurface $\varphi:M\to\R^{n+1}$ is immersed in
  $\R^{n+1}$, we can choose a Riemannian covering map 
  $I:\R^{n+1}\to\TTT$ and consider the immersion
  $\widetilde{\varphi}=I\comp\varphi:M\to\TTT$. Then, we define as
  above, for every $\tau>0$, the family ${\mathcal
    F}_{\TTT,\tau}$ of smooth positive functions $u:\TTT\to\R$ 
  such that $\int_\TTT u\,dx=1$ and 
  there exists a smooth positive solution of the problem
  $$
  \begin{cases}
    v_t=-\Delta v\, \text{ in $\TTT\times[0,\tau)$}\\
    v(p,0)=u(x)\, \text{ for every $p\in\TTT$}\,.\\
  \end{cases}
  $$
  Then, we define the following quantity
  $$
  \sigma_\TTT(\varphi,\tau)=\sup_{u\in{\mathcal{F}_{\TTT,\tau}}}\sqrt{4\pi\tau}\int_{\widetilde{M}} 
  u\,d\widetilde{\mu}
  $$
where $\widetilde{M}$ refers to the fact that we are considering the
immersion $\widetilde{\varphi}:M\to\TTT$.

Notice that another possibility is simply to embed isometrically a convex set
$\Omega\subset\R^{n+1}$ containing $\varphi(M)$ in a flat Riemannian
manifold $\TTT$ (during the mean curvature flow a hypersurface
$\varphi$ initially contained in $\Omega$ stays ``inside'' for all the
evolution).

As before, these quantities are well defined, finite, positive and
  monotonically decreasing if $\varphi_t$ moves by mean curvature.
\end{rem} 

\section{Applications}

\subsection{A No--Breathers Result}

\begin{dfnz}
A {\em breather} (following Perelman~\cite{perel1}) 
for mean curvature flow in $\R^{n+1}$ is a smooth
 $n$--dimensional hypersurface evolving by mean
 curvature $\varphi:M\times[0,T)\to\R^{n+1}$, such that there exists a time
 $\overline{t}>0$, an isometry $L$ of $\R^{n+1}$ and a positive constant
 $\lambda\in\R$ with $\varphi(M, \overline{t})=\lambda L(\varphi(M,0))$.
\end{dfnz}
\begin{rem}
It is useless to consider nonshrinking (steady or expanding) 
compact breather of MCF, by the comparison with evolving spheres, 
they simply do not exist.\\
To authors' knowledge, it is unknown if there exist nonhomothetic,
noncompact ``steady'' or ``expanding'' breathers.
\end{rem}

\begin{teo}
Every compact breather is a homothetic solution to MCF.
\end{teo}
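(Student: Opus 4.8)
The plan is to adapt Perelman's no--breathers argument, using the monotone quantity $\nu$ as the relevant functional. First I would record its two invariances. Since the Euclidean heat equation, the Lebesgue measure on $\R^{n+1}$ and the area measure $d\mu$ on $M$ are all invariant under the isometries of $\R^{n+1}$, for any isometry $L$ the correspondence $u\mapsto u\comp L^{-1}$ is a bijection of ${\mathcal F}_\tau$ onto itself which preserves $\int_{\R^{n+1}}u\,dx=1$ and the backward heat equation, and $\int_M(u\comp L^{-1})\,d\mu_{L\comp\varphi}=\int_M u\,d\mu_\varphi$; hence $\sigma(L\comp\varphi,\tau)=\sigma(\varphi,\tau)$ and thus $\nu(L\comp\varphi)=\nu(\varphi)$. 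By Proposition~\ref{rescprop}, $\sigma(\lambda\varphi,\lambda^2\tau)=\sigma(\varphi,\tau)$ for every $\lambda>0$, and taking the supremum over $\tau>0$ gives the scaling invariance $\nu(\lambda\varphi)=\nu(\varphi)$. Combining the two, $\nu(\lambda L\comp\varphi)=\nu(\varphi)$ for every $\lambda>0$ and every isometry $L$ of $\R^{n+1}$.

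Next, let $\varphi:M\times[0,T)\to\R^{n+1}$ be a compact breather, with $\overline{t}\in(0,T)$, an isometry $L$ and $\lambda>0$ such that $\varphi(M,\overline{t})=\lambda L(\varphi(M,0))$. By the invariances above, $\nu(\varphi_{\overline{t}})=\nu(\lambda L\comp\varphi_0)=\nu(\varphi_0)$. On the other hand, by Proposition ``Monotonicity and Differentiability -- II'' the function $t\mapsto\nu(\varphi_t)$ is finite (Proposition~\ref{nudef}) and monotone nonincreasing on $[0,T)$, so it is constant on $[0,\overline{t}]$; moreover, letting $p_{\varphi_s},\tau_{\varphi_s}$ be the maximizers granted by Propositions~\ref{propo} and~\ref{nudef}, the distributional inequality of that proposition with $r=0$ and $t=\overline{t}$ reads
\[
0=\nu(\varphi_0)-\nu(\varphi_{\overline{t}})\geq\int_0^{\overline{t}}\int_M\frac{e^{-\frac{\vert x-p_{\varphi_s}\vert^2}{4\tau_{\varphi_s}}}}{(4\pi\tau_{\varphi_s})^{n/2}}\,\Bigl\vert\HHH+\frac{\langle x-p_{\varphi_s}\,\vert\,\nu\rangle}{2\tau_{\varphi_s}}\Bigr\vert^2\,d\mu_s\,ds\,.
\]
Since the Gaussian weight is strictly positive and the evolving surfaces are smooth, this forces, for almost every $s\in[0,\overline{t}]$, the identity $\HHH+\langle x-p_{\varphi_s}\,\vert\,\nu\rangle/2\tau_{\varphi_s}\equiv0$ on $M_s$; that is, $M_s$ is a homothetically shrinking self--similar soliton with centre $p_{\varphi_s}$ and collapsing time $\tau_{\varphi_s}$.

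Finally I would upgrade ``soliton equation for almost every time'' to ``globally homothetic flow''. Fixing one admissible $s_0\in(0,\overline{t})$, the explicit self--similar flow $t\mapsto p_{\varphi_{s_0}}+\sqrt{1-(t-s_0)/\tau_{\varphi_{s_0}}}\,(M_{s_0}-p_{\varphi_{s_0}})$ is a smooth compact mean curvature flow whose time--$s_0$ slice is $M_{s_0}$, so by uniqueness of smooth compact mean curvature flow it coincides with $(M_t)$ for $t\geq s_0$; the same comparison run backward (equivalently, the fact that $M_t$ satisfies the identical soliton equation for almost every $t<s_0$, with centre and scale forced to agree by continuity) extends this to the whole interval of existence. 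Hence $(\varphi_t)$ is a homothetic solution of the flow. I expect this last step to be the only genuinely delicate point: one must check that $p_{\varphi_t}$ and $\tau_{\varphi_t}$ can be chosen independent of $t$ before invoking uniqueness, and, further back, that the measurable selection of the maximizers and the use of Hamilton's trick behind Proposition ``Monotonicity and Differentiability -- II'' are legitimate even when the maximizers fail to be unique.
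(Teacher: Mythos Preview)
Your argument is correct and takes a genuinely different route from the paper. The paper works with $\sigma(\varphi_t,\tau)$ and the monotonicity of Proposition~\ref{monotsigma}: using $\sigma(\lambda\varphi,\lambda^2\tau)=\sigma(\varphi,\tau)$ together with isometry invariance, it chooses the special constant $C=\overline{t}/(1-\lambda^2)$ so that $(C-\overline{t})/\lambda^2=C$, which forces $\sigma(\varphi_0,C)=\sigma(\varphi_{\overline{t}},C-\overline{t})$ and hence equality in the monotonicity inequality along $\tau(t)=C-t$. You instead pass directly to $\nu(\varphi)=\sup_{\tau>0}\sigma(\varphi,\tau)$, which is \emph{already} invariant under dilations and isometries, so no clever choice of $C$ is needed and the argument does not even use that $\lambda<1$ (a fact the paper invokes via comparison with spheres). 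What the paper's approach buys is an explicit time scale: the soliton equation comes out with $\tau(t)=C-t$ for a fixed $C$, so the centre--and--scale data at different times are automatically compatible with a single homothetic flow; in your version the maximising $\tau_{\varphi_s}$ is a priori just some measurable function of $s$, which is why you (rightly) flag the ``upgrade'' step as the delicate point. In fact the paper is rather terse there too---it simply declares that the pointwise soliton equation ``characterises a homothetically shrinking solution''---so your more careful discussion of uniqueness of smooth compact MCF starting from a self--shrinker slice is, if anything, an improvement in rigour.
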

\begin{proof}
By the rescaling property of $\sigma$ in Proposition~\ref{rescprop}, fixing $C>0$ we
have
$$
\sigma(\varphi_0,C)\geq\sigma(\varphi_{\overline{t}},C-\overline{t})=\sigma(\lambda
\varphi_0,C-\overline{t})=\sigma(\varphi_0,(C-\overline{t})/\lambda^2)
$$
hence, if we choose $C=\frac{\overline{t}}{1-\lambda^2}$ we have
$C>\overline{t}$, as $\lambda<1$ and
$(C-\overline{t})/\lambda^2=C$. It follows that
$$
\sigma(\varphi_0,C)=\sigma(\varphi_{\overline{t}},C-\overline{t})
$$
and (for such special $C$), by Proposition~\ref{monotsigma}, if $\tau(t)=C-t$
$$
\int_0^{\overline{t}}\int_M
    \frac{e^{-\frac{\vert
      x-p_{\tau(t)}\vert^2}{4\tau(t)}}}{(4\pi\tau(t))^{n/2}}
\left\vert\HHH+\frac{\langle x-p_{\tau(t)}\,\vert\,\nu\rangle}{2\tau(t)}\right\vert^2\,d\mu_t\,dt=0\,.
$$
This implies that for every $t\in(0,\overline{t})$ we have 
$\HHH(x,t)=-\frac{\langle x-p_{\tau(t)}\,\vert\,\nu\rangle}{2(C-t)}$ for some
$p_{\tau(t)}\in\R^{n+1}$, which is the well known equation characterizing a
homothetically shrinking solution of MCF.
\end{proof}

\begin{rem} This is the same argument to show that compact
  shrinking breathers of Ricci flow are actually Ricci gradient solitons.\\
  Recalling the monotone nondecreasing quantity $\mu$ of Perelman
  in~\cite{perel1}, along a
  Ricci flow $g(t)$ of a compact, $n$--dimensional Riemannian manifold $M$, 
  $$
  \mu(g,\tau)=\inf_{\int_M u=1,\, u>0}\int_M \Bigl(\tau\Bigl[\RRR+\frac{\vert\nabla
    u\vert^2}{u}\Bigl]-u\log{u}-\frac{un}{2}\log{[4\pi\tau]}-un\Bigr)\,dV\,.
  $$
  By the rescaling property $\mu(\lambda g,\lambda\tau)=\mu(g,\tau)$, if
  we have that $g(\overline{t})=\lambda dL^*g(0)$ for some
  diffeomorphism $L:M\to M$ and $0<\lambda<1$, fixing $C>0$ we have
  $$
  \mu(g(0),C)\leq\mu(g(\overline{t}),C-\overline{t})=\mu(\lambda
  dL^*g(0),C-\overline{t})=\mu(\lambda g(0),C-\overline{t})
  =\mu(g(0),(C-\overline{t})/\lambda)
  $$
  hence, if we choose $C=\frac{\overline{t}}{1-\lambda}$ we have
  $C>\overline{t}$, as $\lambda<1$ and
  $(C-\overline{t})/\lambda=C$. It follows that
  $$
  \mu(g(0),C)=\mu(g({\overline{t}}),C-\overline{t})
  $$
  and by the results of Perelman, $g(t)$ is a shrinking soliton.
\end{rem}

\subsection{Singularities}

If $\varphi:M\times[0,T)\to\R^{n+1}$ is a MCF of a smooth, compact, {\em embedded} 
hypersurface, it is well known that during the flow it remains
embedded and there exists a finite maximal time $T>0$ of smooth existence when the
curvature $\AAA$ is unbounded as $t\nearrow T$.\\
Moreover for every $t\in[0,T)$
$$
\sup_{p\in M}\vert \AAA(p,t)\vert\geq\frac{1}{\sqrt{2(T-t)}}\,.
$$
If there exists a constant $C>0$ such that also
$$
\sup_{p\in M}\vert \AAA(p,t)\vert\leq\frac{C}{\sqrt{2(T-t)}}\,.
$$
we say that at $T$ we have a {\em type I} singularity, otherwise we say
the singularity is of {\em type II}.

We want to show that if at time $T$ we have a singularity, the associated quantity
$\Sigma=\lim_{t\to T^-}\sigma(\varphi_t,\tau)$ is larger than
one.\\
Indeed, for every $p\in\Ri$ such that there exists a sequence of
points $q_i\in M$ and times $t_i\nearrow T$ with
$p=\lim_{i\to\infty}\varphi(q_i,t_i)$, we consider the function
$\Theta(p)$ defined in equation~\eqref{theta}. 
By a simple semicontinuity argument, we can see that
$\Theta(p)\geq 1$ for every $p\in\Ri$ like above,
see~\cite[Corollary~4.20]{eck1}, hence, as 
$\Sigma\geq\sup_{p\in\R^{n+1}}{\Theta}(p)$ we get $\Sigma\geq 1$.\\
If then $\Sigma=1$, it forces $\Theta(p)=1$ for all such points $p$ which
implies, by the local regularity result of
White~\cite{white1}, that the flow cannot develop a singularity at
time $T$ (see also Ecker~\cite{eck1}).

Suppose now to have a type I singularity at time $T$.\\
By Proposition~\ref{monotsigma} we know that along this flow, for
$C=T$, hence, $\tau=T-t$,
\begin{equation*}
    \sigma(\varphi_r,T-r)-\sigma(\varphi_t,T-t)
    \geq \int_r^t\int_{M}\frac{e^{-\frac{\vert
      x-p_{T-s}\vert^2}{4(T-s)}}}{[4\pi(T-s)]^{n/2}}\left\vert\HHH+\frac{\langle x-p_{T-s}\,\vert\,\nu\rangle}{2(T-s)}
\right\vert^2\,d\mu_s(x)\,ds
\end{equation*}
for every $0\leq r\leq t\leq T$, hence,
\begin{equation}\label{f1}
    C(\varphi_0)\geq\sigma(\varphi_0,T)-\Sigma    \geq \int_0^T\int_{M}\frac{e^{-\frac{\vert
      x-p_{T-s}\vert^2}{4(T-s)}}}{[4\pi(T-s)]^{n/2}}\left\vert\HHH+\frac{\langle x-p_{T-s}\,\vert\,\nu\rangle}{2(T-s)}
\right\vert^2\,d\mu_s(x)\,ds
\end{equation}

Rescaling every hypersurface $\varphi_t$ as in~\cite{huisk3}, 
around the point $p_{T-t}$ as follows,
$$
\widetilde{\varphi}_s(q)=\frac{\varphi(q,t(s))-p_{T-t(s)}}{\sqrt{2(T-t(s))}}
\qquad s=s(t)=-\frac{1}{2}\log (T-t)
$$
and changing variables in formula~\eqref{f1}, we get
\begin{equation}\label{eq1000}
C\geq \int_{M} e^{-\frac{\vert
    y\vert^2}{2}}\,d\widetilde{\mu}_{-\frac{1}{2}\log T}\geq
\int_{-\frac{1}{2}\log T}^{+\infty}\int_{M} e^{-\frac{\vert y\vert^2}{2}}
\vert\widetilde{\HHH}+\langle
  y\,\vert\,\widetilde{\nu}\rangle\vert^2\,d\widetilde{\mu}_s(y)
\,ds\,.
\end{equation}

It follows that reasoning like in~\cite{huisk3} and~\cite{stone1}
(or~\cite{stone3}), if the singularity is of type I, the curvature of
the rescaled hypersurfaces $\widetilde{\varphi}_s:M\to\R^{n+1}$ is
uniformly bounded and any sequence converges (up to a subsequence) to
a limit embedded hypersurface $\widetilde{M}_\infty$ satisfying $\widetilde{\HHH}=-\langle
  x\,\vert\,\widetilde{\nu}\rangle$ which is the defining equation for a 
homothetic solution of MCF.\\
Moreover, By the estimates of Stone~\cite[Lemma~2.9]{stone1}, this limit
hypersurface satisfies
\begin{equation*}
\frac{1}{(2\pi)^{n/2}}\int_{\widetilde{M}_\infty} e^{-\frac{\vert
    y\vert^2}{2}}\,d{\mathcal{H}}^n(y)=\lim_{t\to
  T^-}\sigma(\varphi_t,T-t)=\Sigma>1\,.
\end{equation*}
Clearly, by this equation, this embedded limit hypersurface cannot be
empty. Moreover, it cannot be flat also, as it would be an hyperplane for the
origin of $\Ri$ (the only hyperplanes satisfying $\HHH=-\langle
  x\,\vert\,\nu\rangle$ must pass through the origin) as the above integral
  would be one.

\begin{prop} At a singular time $T$ of the MCF of an embedded compact
  hypersurface the quantity $\Sigma$ is larger than one.\\
  If the singularity of the flow is of type I, any sequence of rescaled
  hypersurfaces (with the maximal curvature) around the maximizer
  points for the Huisken's functional at times $t_i\nearrow T$
  converges, up to a subsequence, to a nonempty and nonflat, smooth embedded
  limit hypersurface, satisfying $\HHH=-\langle x\,\vert\,\nu\rangle$.
\end{prop}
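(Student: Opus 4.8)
The plan is to assemble the proposition from the pieces already developed in the paper, since essentially all the analytical work has been carried out in the preceding discussion. First I would establish the claim $\Sigma > 1$ at a singular time: by the semicontinuity argument referenced via \cite[Corollary~4.20]{eck1}, for any point $p\in\R^{n+1}$ arising as a limit $p=\lim_{i\to\infty}\varphi(q_i,t_i)$ with $t_i\nearrow T$ one has $\Theta(p)\geq 1$; combining this with $\Sigma\geq\sup_{p}\Theta(p)$ gives $\Sigma\geq 1$. To rule out equality, I would invoke White's local regularity theorem \cite{white1}: if $\Sigma=1$ then $\Theta(p)=1$ at all such points, forcing the flow to stay smooth past $T$, contradicting that $T$ is a singular time. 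Hence $\Sigma>1$.

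Next, for the type I case, I would recall the rescaling $\widetilde{\varphi}_s(q)=(\varphi(q,t(s))-p_{T-t(s)})/\sqrt{2(T-t(s))}$ with $s=-\tfrac12\log(T-t)$ centered at the maximizer points $p_{T-t}$ of Huisken's functional (Proposition~\ref{propo}), and use estimate~\eqref{eq1000}, which shows that the backward-rescaled integral $\int_M e^{-|y|^2/2}|\widetilde{\HHH}+\langle y\,|\,\widetilde{\nu}\rangle|^2\,d\widetilde{\mu}_s\,ds$ is finite over $[-\tfrac12\log T,+\infty)$. Under the type I curvature bound, the rescaled hypersurfaces have uniformly bounded second fundamental form (and, by Stone's estimates \cite{stone1}, bounded derivatives and uniform area bounds), so any sequence $s_i\to\infty$ has a subsequence converging smoothly on compact sets to a limit $\widetilde{M}_\infty$. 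Since the time-integral in~\eqref{eq1000} converges, the integrand tends to zero along the flow in an integrated sense, which forces $\widetilde{\HHH}=-\langle x\,|\,\widetilde{\nu}\rangle$ on the limit — the self-shrinker equation. Embeddedness of the limit follows from embeddedness being preserved under the flow and under the limit (Stone's compactness).

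Finally I would address nonemptiness and nonflatness. By Stone's area estimate \cite[Lemma~2.9]{stone1}, the Gaussian area of the limit equals $\Sigma$: $\frac{1}{(2\pi)^{n/2}}\int_{\widetilde{M}_\infty}e^{-|y|^2/2}\,d\H^n=\Sigma>1$. An empty limit would give Gaussian area $0$, a contradiction, so $\widetilde{M}_\infty\neq\emptyset$. A flat limit satisfying $\widetilde{\HHH}=-\langle x\,|\,\widetilde{\nu}\rangle$ must be a hyperplane through the origin, whose Gaussian area is exactly $1$; since $\Sigma>1$, the limit cannot be flat. This completes the proof.

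The main obstacle, such as it is, lies in justifying that the rescaled flow's integrated curvature-defect~\eqref{eq1000} being finite actually forces the pointwise limit equation $\widetilde{\HHH}=-\langle x\,|\,\widetilde{\nu}\rangle$ on $\widetilde{M}_\infty$: one needs the uniform $C^\infty$ bounds from the type I hypothesis (via Stone) to upgrade $L^1$-in-time decay of $\int_M e^{-|y|^2/2}|\widetilde{\HHH}+\langle y\,|\,\widetilde{\nu}\rangle|^2\,d\widetilde{\mu}_s$ into genuine convergence of a subsequence of the integrands to zero, and then pass this to the smooth limit. This is exactly the argument of Huisken \cite{huisk3} and Stone \cite{stone1,stone3}, so I would simply cite it rather than reproduce it; the only genuinely new input here is that the rescaling is performed around the \emph{moving} maximizer points $p_{T-t}$ for $\sigma$ rather than a fixed point, but the monotonicity in Proposition~\ref{monotsigma} was precisely designed to accommodate this.
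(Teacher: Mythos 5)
Your proposal follows the paper's own argument essentially verbatim: the same semicontinuity plus White's local regularity step for $\Sigma>1$, the same rescaling about the moving maximizer points combined with estimate~\eqref{eq1000} and the Huisken--Stone compactness to extract a self--shrinker limit, and the same use of Stone's Gaussian area identity $\frac{1}{(2\pi)^{n/2}}\int_{\widetilde{M}_\infty}e^{-\vert y\vert^2/2}\,d\H^n=\Sigma>1$ to exclude the empty and flat (hyperplane through the origin) cases. The proposal is correct and takes the same route as the paper, including the citations used to cover the passage from integrated curvature decay to the pointwise limit equation.
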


Suppose now that we are dealing with the special case of an embedded
closed curve $\gamma_t$ evolving in the plane $\R^2$. Rescaling as
above, {\em without assuming anything} about the ``type'' of a 
singularity at some time $T$, 
we can extract a subsequence $\widetilde{\gamma}_{s_i}$ 
of rescaled curves such that: 
$$
\int_{\gamma_{s_i}} e^{-\frac{\vert y\vert^2}{2}}
\vert\widetilde{\HHH}+\langle
  y\,\vert\,\widetilde{\nu}\rangle\vert^2\,d\mathcal{H}^1(y)\to0
$$
with locally equibounded lengths. This implies that the curves
$\widetilde{\gamma}_{s_i}$ have also locally equibounded $L^2$ norms of
the curvature. Possibly passing to another subsequence (not relabeled)
we can assume that

\begin{itemize}
\item the curves $\widetilde{\gamma}_{s_i}$ converges in $C^1_{loc}$ 
to a limit curve $\widetilde{\gamma}_{\infty}$ 
with equibounded curvatures $\widetilde{k}$ in $L^2_{loc}$;

\item the curve $\widetilde{\gamma}_{\infty}$ satisfies
  $\widetilde{k}=-\langle x\,\vert\,\nu\rangle$ distributionally;

\item there holds $\frac{1}{(2\pi)^{n/2}}\int_{\widetilde{\gamma}_\infty} e^{-\frac{\vert
    y\vert^2}{2}}\,d{\mathcal{H}}^1(y)=\Sigma>1$;

\item finally, the curve $\widetilde{\gamma}_{\infty}$ is embedded,
  that is, without self--intersections, by the geometric argument of
  Huisken in~\cite{huisk2}.
\end{itemize}

By a bootstrap argument, using the condition $\widetilde{k}=-\langle
x\,\vert\,\nu\rangle$, it follows that $\widetilde{\gamma}_\infty$ 
is a smooth curve and since the only embedded curves in the plane
satisfying such condition are the lines through the origin and the unit
circle, $\widetilde{\gamma}_\infty$ has to be among them.\\
Then, the curve $\widetilde{\gamma}_\infty$ cannot be a line through
the origin, because for all of them the value of the integral $\frac{1}{(2\pi)^{n/2}}\int_{\widetilde{\gamma}_\infty} e^{-\frac{\vert
    y\vert^2}{2}}\,d{\mathcal{H}}^1(y)$ is one. Hence,
$\widetilde{\gamma}_\infty$ must be the unit circle.\\
This implies that at some time the curve $\gamma_t$ has become
$C^1$--close, hence a graph, over a round circle (in particular, it is
starshaped). It is then straightforward to see by means of maximum
principle that this last property is preserved during the evolution.\\
Then, by means of the interior estimates of Ecker and
Huisken~\cite{eckhui2}, one can find a close (in time) sequence of
rescaled curves converging in $C^2_{loc}$ to the unit
circle. Then, as this implies that at some time the curve has become
convex, the singularity can only be a type I {\em vanishing}
singularity. As a consequence, type II singularities for embedded
closed curves are not possible.

\begin{rem}
It would be very interesting if this argument can be extended in 
higher dimensions, that is, if rescaling the moving hypersurface
around the points maximizing the Huisken's functional could help to 
produce homothetic blowups also in the case of a Type II
singularity. Some results in this direction has been obtained 
by Ilmanen in~\cite{ilman3,ilman4}.
\end{rem}

\begin{ackn} Annibale Magni is partially supported by the ESF Programme ``Methods of
Integrable Systems, Geometry, Applied Mathematics'' (MISGAM) and Marie
Curie RTN ``European Network in Geometry, Mathematical Physics and
Applications'' (ENIGMA).
\end{ackn}

\bibliographystyle{amsplain}
\bibliography{monoton}

\end{document}